\documentclass{article}

\usepackage[all,pdf,2cell]{xy}\UseAllTwocells\SilentMatrices
\usepackage{amsthm}
\usepackage{amsmath}
\usepackage{amsfonts}
\usepackage{amssymb}
\usepackage{enumerate}
\usepackage{cleveref}
\usepackage{graphicx}
\newtheorem{theorem}{Theorem}[section]
\newtheorem{lemma}[theorem]{Lemma}
\newtheorem{corollary}[theorem]{Corollary}

\theoremstyle{definition}
\newtheorem{definition}[theorem]{Definition}
\newtheorem{example}[theorem]{Example}
\newtheorem{problem}{Problem}
\newtheoremstyle{sltheorem}
{2pt}                
{2pt}                
{\slshape}        
{}                
{\slshape}       
{:}               
{ }               
{ }                
\theoremstyle{sltheorem}
\newtheorem{claim}{Claim}
\newcommand{\newcategory}[1]{\expandafter\newcommand\csname #1\endcsname{\mathbf{#1}}}
\newcommand{\pp}{{\perp\perp}}

\newcommand{\cl}{\mathrm{cl}}
\newcommand{\pow}{\mathcal{P}}

\begin{document}
\title{N-free posets and orthomodularity}
\author{Gejza Jenča}
\maketitle
\begin{abstract}
We prove that the incomparability orthoset of
a finite poset is Dacey if and only if the poset is N-free. We give a characterization of finite posets with compatible incomparability orthosets.
\end{abstract}
\section{Introduction}

In his PhD. thesis \cite{dacey1968orthomodular}, Dacey explored the notion of
``abstract orthogonality'', by means of sets equipped with a symmetric,
irreflexive relation $\perp$. He named these structures {\em orthogonality spaces}.
Recently, they have been renamed {\em orthosets}
\cite{paseka2022normal,paseka2022categories} and we will use this terminology
in the present paper. 

Every orthoset has an orthocomplementation operator $X\mapsto X^\perp$
defined on the set of all its subsets. Dacey proved that $X\mapsto X^\pp$ is a
closure operator and that the set of all closed subsets of an orthoset forms
a complete ortholattice, which we call {\em the logic of an orthoset}.
Moreover, Dacey gave a characterization of orthosets such that their logic is
an orthomodular lattice. The orthosets of this type are nowadays called {\em Dacey
spaces} or {\em Dacey graphs} \cite{sumner1974dacey}.

Let us remark that (as a special case of a {\em polarity between two sets}) the
idea of an orthoset and its associated lattice of closed sets
appears already in the classical monograph \cite[Section
V.7]{birkhoff1948lattice}.  Nevertheless, Dacey was probably the first person
that explored the orthomodular law in this context.

Since orthosets are nothing but (undirected, simple, loopless)
graphs, it is perhaps not surprising that the $X\mapsto X^\perp$ mapping appears
under the name {\em neighbourhood operator} in graph theory. Implicitly, the logic
associated with the neighbourhood operator was used in the seminal paper
\cite{lovasz1978kneser}, in which Lovász proved Kneser's conjecture.
Explicitly, they were used by Walker in \cite{walker1983graphs}, where 
the ideas from \cite{lovasz1978kneser} were generalized and reformulated in the language of category theory.

For an overview of the results on orthosets and more general
{\em test spaces}, see \cite{wilce2011test}. See also \cite{paseka2022normal,paseka2022categories} for some more recent results
on orthosets.

In \cite{jenca2023orthogonality} we have associated to every finite bounded poset $P$ 
an orthoset $(Q^+(P),\perp)$ consisting of non-singleton closed intervals. We have proved that a finite bounded poset is a lattice if and only if the associated orthoset is Dacey.

In the present paper, we continue this line of research. To every finite poset
$P$, we associate an orthoset $(P,\perp)$, where $\perp$ denotes
incomparability of elements. We call this orthoset {\em the incomparability
orthoset of $P$}. We prove that the incomparability orthoset of a finite poset is
Dacey if and only if the poset is N-free in the sense of \cite{habib1985n}.  In
addition, we characterize orthosets that give rise to a Boolean algebra as
their logic, we call them {\em compatible orthosets}. We then use this result
to give a structural characterization of finite posets with compatible
incomparability orthosets.

\section{Preliminaries}

\subsection{Posets, ortholattices}

We assume working knowledge of posets and lattices. For here undefined notions and notations, see for example \cite{stanley1986enumerative,davey2002introduction}.

Let $P$ be a poset. For $x,y\in P$, we denote $x\prec y$ 
and say that {\em $y$ covers x} or that {\em
$x$ is covered by $y$} if $x<y$ and there is no $z\in P$ such that $x<z<y$.

An {\em ortholattice} is a bounded lattice $(L,\vee,\wedge,0,1,^\perp)$
equipped with an antitone mapping called {\em orthocomplementation}
$\perp\colon L\to L$ such that
\begin{itemize}
\item $0^\perp=1$, $1^\perp=0$
\item $x^\pp=x$
\item $(x\vee y)^\perp=x^\perp\wedge y^\perp$
\item $(x\wedge y)^\perp=x^\perp\vee y^\perp$
\item $x\wedge x^\perp=0$
\item $x\vee x^\perp=1$
\end{itemize}

A {\em Boolean algebra} is an ortholattice $L$ satisfying the distributive law
$$
x\wedge(y\vee z)=(x\wedge y)\vee(x\wedge z),
$$
for all $x,y,z\in L$.
An ortholattice is $L$ an {\em orthomodular lattice} if it satisfies the 
orthomodular law
$$
x\leq y\implies y=x\vee(y\wedge x^\perp),
$$
for all $x,y\in L$. 
It is easy to see that every Boolean algebra is an orthomodular lattice.

\subsection{Orthosets}

\begin{definition}
Let $V$ be a set, write $\pow(V)$ for the powerset of $V$. A mapping $\cl\colon
\pow(V)\to\pow(V)$ is a {\em closure operator} if it satisfies the conditions,
for all $X,Y\in\pow(V)$ 
\begin{enumerate}
\item $X\subseteq\cl(X)$,
\item $X\subseteq Y\implies \cl(X)\subseteq\cl(Y)$,
\item $\cl(\cl(X))=\cl(X)$.
\end{enumerate}
The pair $(V,\cl)$ is then called {\em a closure space}.
\end{definition}
In a closure space $(V,\cl)$, a set $X\subseteq V$ is called {\em closed} if
$\cl(X)=X$. It is easy to see that the set of all closed subsets of a closure space
forms a complete lattice under inclusion.

\begin{definition}
An {\em orthoset} $(O,\perp)$ is a set $O$ equipped with an irreflexive
symmetric binary relation $\perp\subseteq O\times O$.
\end{definition}

Let $(O,\perp)$ be an orthoset. 
We say that two elements $x,y\in O$ with $x\perp y$ are {\em
orthogonal}. A subset $B$ of $O$ is {\em pairwise orthogonal} if
every distinct pair of elements of $B$ is orthogonal.
Two subsets $X,Y$ of $O$ are {\em orthogonal} (in symbols $X\perp Y$) if 
$x\perp y$, for all $x\in X$ and $y\in Y$. We write $y\perp X$ for
$\{y\}\perp X$.
For every subset $X$ of $O$, we write 
\[
X^\perp=\{y\in O\mid y\perp X\}
\]
so that $X^\perp$ is the greatest subset of $O$ orthogonal to $X$. For singletons,
we abbreviate $\{x\}^\perp=x^\perp$.

Since the mapping $X\mapsto X^\perp$ is antitone, the mapping $X\mapsto X^\pp$ is
isotone.
Moreover, for every $X\subseteq O$ we have 
$X\subseteq X^\pp$ and $X^\perp=X^{\pp\perp}$. Consequently, $X\mapsto X^\pp$
is a closure operator.  Note that $s\in X^\pp$ if and only if, for all $a\in
O$,
$$
a\perp X\implies s\perp a.
$$
We say that a subset $X$ of $O$ is {\em orthoclosed} if $X=X^\pp$.
Note that for every $Y\subseteq O$, $Y^\perp$ is orthoclosed.

The set of all orthoclosed subsets of an orthoset forms a complete ortholattice $L(O,\perp)$,
with meets given by intersection and joins given by 
$$
X\vee Y=(X\cup Y)^\pp
$$
Note that $X\vee Y=(X^\perp\cap Y^\perp)^\perp$.
The smallest element of $L(O,\perp)$ is $\emptyset$ and the greatest element of
$L(O,\perp)$ is $O$. We say that $L(O,\perp)$ is {\em the logic} of $O$.

The following characterization of orthocomplements will be useful for our purposes.
\begin{lemma}\label{lemma:orthocomplements}
Let $(O,\perp)$ be an orthoset. Let $X,Y$ be subsets of $O$.
Then $X$ is orthoclosed and $Y=X^\perp$ if and only if $X\perp Y$ and
for every $z\in O\setminus(X\cup Y)$ there are $x\in X$, $y\in Y$ such that
$z\not\perp x$ and $z\not\perp y$.
\end{lemma}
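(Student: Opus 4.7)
The plan is to prove both directions of the biconditional by unpacking the characterization of $X^{\pp}$ recalled just before the statement (namely, $s\in X^{\pp}$ iff every $a\in O$ with $a\perp X$ satisfies $s\perp a$), together with the irreflexivity of $\perp$. Both sides of the $z$-condition will be used, but asymmetrically: the existence of $x\in X$ gives information about $X^{\perp}$, while the existence of $y\in Y$ gives information about $X^{\pp}$.

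For the forward direction I would assume $X$ is orthoclosed and $Y=X^\perp$. Then $X\perp Y$ is immediate. For $z\in O\setminus(X\cup Y)$, since $z\notin X^\perp=Y$ the defining property of $X^\perp$ supplies an $x\in X$ with $z\not\perp x$. Since $z\notin X=X^{\pp}$, the characterization of $X^{\pp}$ supplies some $a\in O$ with $a\perp X$ and $z\not\perp a$; such an $a$ lies in $X^\perp=Y$, providing the required $y$.

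For the converse I would assume $X\perp Y$ together with the $z$-condition, and first argue $Y=X^\perp$. The inclusion $Y\subseteq X^\perp$ is immediate from $X\perp Y$. For the reverse inclusion, take $z\in X^\perp$; by irreflexivity $z\notin X$, so if $z\notin Y$ then $z\in O\setminus(X\cup Y)$, and the hypothesis would give $x\in X$ with $z\not\perp x$, contradicting $z\in X^\perp$. Hence $X^\perp\subseteq Y$. Next I would show $X=X^{\pp}$; only $X^{\pp}\subseteq X$ requires work. Given $z\in X^{\pp}$, since $Y\subseteq X^\perp$ the characterization of $X^{\pp}$ forces $z\perp y$ for every $y\in Y$; irreflexivity then prevents $z\in Y$, and if $z\in O\setminus(X\cup Y)$ the hypothesis would supply $y\in Y$ with $z\not\perp y$, a contradiction. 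So $z\in X$.

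I do not anticipate a genuine obstacle here; the proof is a direct bookkeeping exercise. The one thing to watch is that the two halves of the $z$-condition (the existence of $x$ and of $y$) get routed to the two separate tasks in the converse ($Y=X^\perp$ and $X=X^{\pp}$), and that the degenerate cases $X=\emptyset$ or $Y=\emptyset$ remain consistent with the vacuous branches of each argument.
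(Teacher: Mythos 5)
Your proof is correct and follows essentially the same route as the paper's: the forward direction is the same contrapositive bookkeeping, and the backward direction establishes $Y=X^\perp$ identically. The only cosmetic difference is at the end, where the paper obtains $X=Y^\perp$ by appealing to the symmetry of the hypothesis in $X$ and $Y$, while you verify $X^{\pp}\subseteq X$ directly from the characterization of $X^{\pp}$; both are trivially equivalent.
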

\begin{proof}
$(\Rightarrow):$ 
Let $z\in O\setminus(X\cup Y)$. If for all $x\in X$ $z\perp x$,
then $z\in X^\perp=Y$, a contradiction. Hence $z\not\perp x$ for some $x\in X$.
Similarly, if for all $y\in Y$, $x\perp y$, then $z\in Y^\perp=X^\pp=X$ because
$X$ is orthoclosed. Therefore, $z\not\perp y$ for some $y\in Y$.

$(\Leftarrow):$  
Let us prove that $Y=X^\perp$. As $X\perp Y$, $Y\subseteq X^\perp$. For the proof
of the opposite inclusion, suppose that $X^\perp\setminus Y\neq\emptyset$ and let
$z\in X^\perp\setminus Y$.  Since $z\in X^\perp$, $z\notin X$ and since $z\notin
Y$, $z\in O\setminus(X\cup Y)$. By assumption, there are $x\in X$, $y\in Y$ such
that $z\not\perp x$ and $z\not\perp y$. However, $z\not\perp x\in X$ contradicts
$z\in X^\perp$, hence $X^\perp\setminus Y=\emptyset$ and $X^\perp\subseteq Y$.

By symmetry, $X=Y^\perp$. In particular, $X$ is orthoclosed.
\end{proof}

\subsection{Dacey orthosets}

Let $(O,\perp)$ be an orthoset, let $X$ be an orthoclosed subset of
$O$. A maximal pairwise orthogonal subset of $X$ is called a {\em basis of $X$}.

In his PhD. thesis, J.C. Dacey proved the following theorem.
\begin{theorem}\label{thm:dacey}\cite{dacey1968orthomodular}
Let $(O,\perp)$ be an orthoset. Then $L(O,\perp)$ is an orthomodular lattice if
and only if for every orthoclosed subset $X$ of $O$ and every basis $B$ of $X$, $X=B^\pp$. 
\end{theorem}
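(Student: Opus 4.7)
The plan is to prove both directions using bases of orthoclosed subsets, exploiting the fact that $B^{\perp\perp}$ for a basis $B$ of $X$ always sits below $X$ in the orthocomplemented lattice $L(O,\perp)$, and that the orthomodular law says exactly that $X$ is the join of such a ``lower piece'' with its relative orthocomplement.

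For the direction $(\Rightarrow)$, assuming $L(O,\perp)$ is orthomodular, I would start from a basis $B$ of an orthoclosed set $X$. Since $B\subseteq X$, the orthoclosed set $B^{\perp\perp}$ is below $X$ in $L(O,\perp)$. Orthomodularity gives $X=B^{\perp\perp}\vee(X\cap B^\perp)$, so it suffices to show $X\cap B^\perp=\emptyset$. Any $x\in X\cap B^\perp$ is an element of $X$ orthogonal to every $b\in B$; since $\perp$ is irreflexive, $x\notin B$, so $B\cup\{x\}$ would be a strictly larger pairwise orthogonal subset of $X$, contradicting maximality of $B$.

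The direction $(\Leftarrow)$ is the main work. Given $Y\subseteq X$ with both orthoclosed, I want to show $X=Y\vee(X\cap Y^\perp)$. The strategy is to build a basis of $X$ from bases of $Y$ and $X\cap Y^\perp$: pick a basis $B_1$ of $Y$ and a basis $B_2$ of the orthoclosed set $X\cap Y^\perp$. Since $B_1\subseteq Y$ and $B_2\subseteq Y^\perp$, we have $B_1\perp B_2$, so $B_1\cup B_2$ is a pairwise orthogonal subset of $X$. Applying the hypothesis to $Y$ with basis $B_1$ yields $Y=B_1^{\perp\perp}$, hence $B_1^\perp=Y^\perp$. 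The assumption applied again to $X\cap Y^\perp$ with basis $B_2$ yields $X\cap Y^\perp=B_2^{\perp\perp}$, so $(B_1\cup B_2)^{\perp\perp}=B_1^{\perp\perp}\vee B_2^{\perp\perp}=Y\vee(X\cap Y^\perp)$, which is exactly the join we want.

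The key obstacle is verifying that $B_1\cup B_2$ is actually \emph{maximal} pairwise orthogonal in $X$, since without that I cannot invoke the hypothesis to conclude $X=(B_1\cup B_2)^{\perp\perp}$. If $x\in X$ is orthogonal to $B_1\cup B_2$, then $x\in B_1^\perp=Y^\perp$ (using $Y=B_1^{\perp\perp}$ from the first application of the hypothesis), so $x\in X\cap Y^\perp$; but then $x\perp B_2$ together with maximality of $B_2$ inside $X\cap Y^\perp$ and irreflexivity of $\perp$ forces a contradiction by the same argument as in the $(\Rightarrow)$ direction. So $B_1\cup B_2$ is a basis of $X$, the hypothesis applies, and orthomodularity follows. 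Throughout, existence of bases is guaranteed by Zorn's lemma on chains of pairwise orthogonal subsets.
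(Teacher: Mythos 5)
Your proof is correct and complete. Note that the paper itself does not prove this theorem; it is quoted from Dacey's thesis, so there is no in-paper argument to compare against. Your argument is the standard one: the forward direction correctly combines $B^{\perp\perp}\subseteq X$ with the orthomodular decomposition $X=B^{\perp\perp}\vee(X\cap B^\perp)$ and uses irreflexivity plus maximality to kill $X\cap B^\perp$; the converse correctly assembles a basis of $X$ as $B_1\cup B_2$ from bases of $Y$ and of $X\cap Y^\perp$, with the identities $Y^\perp=B_1^\perp$ and $(B_1\cup B_2)^{\perp\perp}=B_1^{\perp\perp}\vee B_2^{\perp\perp}$ doing exactly the work needed, and the maximality check for $B_1\cup B_2$ is the one genuinely nontrivial step and you handle it properly.
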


An orthoset satisfying the condition of \Cref{thm:dacey} is called a
{\em Dacey orthoset}. A orthoclosed subset $X$ in an orthoset such that
$X=B^\pp$, for every basis $B$ of $X$ is called a {\em Dacey subset}.

\begin{lemma}\cite{jenca2023orthogonality}
\label{lemma:chardacey}
Let $X$ be an orthoclosed subset
of an orthoset. The following are equivalent.
\begin{enumerate}[(a)]
\item $X$ is a Dacey subset.
\item For every basis $B$ of $X$, $B^\perp=X^\perp$. 
\item For every basis $B$ of $X$, $B^\perp\subseteq X^\perp$.
\end{enumerate}
\end{lemma}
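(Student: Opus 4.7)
The plan is to prove the cyclic chain (a) $\Rightarrow$ (b) $\Rightarrow$ (c) $\Rightarrow$ (a), using only the two identities $Y\subseteq Y^\pp$ and $Y^\perp=Y^{\pp\perp}$ from the preliminaries, together with the antitonicity of $X\mapsto X^\perp$. Once these are in hand, no combinatorial reasoning about bases is actually required.

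For (a) $\Rightarrow$ (b), assume $X=B^\pp$ and apply $\perp$ to both sides to get $X^\perp=B^{\pp\perp}=B^\perp$, where the last equality is the identity $Y^\perp=Y^{\pp\perp}$ specialized to $Y=B$. The implication (b) $\Rightarrow$ (c) is immediate. For (c) $\Rightarrow$ (a), observe that since $B$ is a basis of $X$ we have $B\subseteq X$, so by antitonicity $X^\perp\subseteq B^\perp$; together with the hypothesis $B^\perp\subseteq X^\perp$ this gives $B^\perp=X^\perp$. Applying $\perp$ once more yields $B^\pp=X^\pp$, and because $X$ is orthoclosed the right-hand side equals $X$, so $X=B^\pp$.

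The only real bookkeeping hazard is tracking which direction inclusions move through the antitone operator $\perp$ and recognizing that the identity $Y^\perp=Y^{\pp\perp}$ collapses the apparently stronger (a) to the apparently weaker (b). Notice also that the argument never exploits maximality of $B$ or even pairwise orthogonality: the whole chain works for any subset $B\subseteq X$, which confirms that the content of the lemma is purely formal manipulation of the Galois-like pair $(\,\cdot\,)^\perp\dashv (\,\cdot\,)^\perp$.
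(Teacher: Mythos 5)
Your proof is correct: the chain (a)~$\Rightarrow$~(b)~$\Rightarrow$~(c)~$\Rightarrow$~(a) goes through exactly as you describe, using only $Y^\perp=Y^{\pp\perp}$, antitonicity, and $B\subseteq X$, and your observation that maximality of $B$ is never used is also accurate. The paper itself states this lemma without proof, citing \cite{jenca2023orthogonality}, so there is no in-paper argument to compare against; yours is the standard Galois-connection argument one would expect.
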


\begin{example}
Let $P_4=\{1,2,3,4\}$, define the orthogonality relation on $P_4$ by the rule
$$
a\perp b\Leftrightarrow |a-b|=1
$$
(As a graph, $P_4$ is the 4-path.)
Let $X=\{1,3\}$. We see that $X^\pp=\{2\}^\perp=\{1,3\}=X$, so $X$ is orthoclosed.
The set $B=\{3\}$ is a basis of $X$. However, $B^\perp=\{3\}^\perp=\{2,4\}\not\subseteq\{2\}=X^\perp$ and, by \Cref{lemma:chardacey}, we see that $(P_4,\perp)$ is not Dacey.
\end{example}
\begin{example}
Let $\mathcal H$ be a Hilbert space.
Let $O=\mathcal H\setminus\{\vec 0\}$. Then $(O,\perp)$, where $\perp$ denotes
the orthogonality of vectors, is a Dacey orthoset. In this orthoset,
orthoclosed subsets correspond to closed subspaces of $\mathcal H$ and bases of
orthoclosed subsets correspond to orthogonal bases of those subspaces. The fact
that this is a Dacey space now simply follows from the fact that a maximal
orthogonal subset of a closed subspace must be an orthogonal basis of that subspace.
\end{example}

\subsection{N-free posets}

\begin{figure} \begin{center} \includegraphics{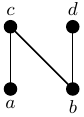} \end{center} \caption{The
N} \label{fig:TheN} \end{figure} Let $P$ be a poset.  For a quadruple of
elements $(a,b,c,d)\in P^4$, we say that they {\em form an N} if and only if
$a<c$, $b\prec c$ (note the covering relation here), $b<d$, and all the other distinct pairs of elements of the set $\{a,b,c,d\}$ are incomparable -- see \Cref{fig:TheN}. We denote this by $N(a,b,c,d)$. A poset
such that no quadruple of elements forms an N is called {\em N-free}. 

Note that
if $P$ is finite then $N(a,b,c,d)$ implies existence of $a'$ and $d'$ such that
$a'\prec c$, $b\prec d'$ and $N(a',b,c,d')$, that means, all the comparable
distinct pairs in $\{a',b,c,d'\}$ are covers. Therefore, in the finite case, a
poset is N-free iff it does not contain $(a',b,c,d')\in P^4$ such that
$N'(a',b,c,d')$.
This is sometimes used as the definition of N-free posets \cite{abdi2023proof}.

Nevertheless, there is another, inequivalent use of the term {\em N-free} -- the posets without quadruples $(a,b,c,d)$ with $a<d$, $b<c$, $b<d$ and all the other
pairs incomparable (so $c$ does not need to cover $b$). These posets are sometimes 
called {\em series-parallel posets} \cite{mohring1989computationally}.

N-free posets in our sense were considered first by Grillet in
\cite{grillet1969maximal} where (a more general version of) the following
theorem was proved.

\begin{theorem}\label{thm:chainantichain}
A finite poset $P$ is N-free if and only if every maximal chain in $P$ intersects
every maximal antichain in $P$.
\end{theorem}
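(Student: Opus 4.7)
The plan is to prove both directions directly, using case analysis based on the cover relation.

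For the forward implication, assume $P$ is N-free and let $C$ be a maximal chain, $A$ a maximal antichain. Suppose for contradiction that $C\cap A=\emptyset$. Since $A$ is a maximal antichain, every element of $C$ is comparable to some element of $A$, so $C=L\cup U$ where $L=\{x\in C:x<a\text{ for some }a\in A\}$ and $U=\{x\in C:x>a\text{ for some }a\in A\}$. The sets $L$ and $U$ are disjoint, because $a'<x<a''$ with $a',a''\in A$ would force two distinct comparable elements of $A$. Neither set can exhaust $C$: if $C\subseteq L$, the maximum of $C$ lies strictly below an element of $A$, contradicting maximality of $C$; dually for $U$. Hence $L$ is a proper nonempty down-segment of $C$ and $U$ its complementary up-segment. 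Writing $b=\max L$ and $c=\min U$, maximality of $C$ forces $b\prec c$ in $P$. Pick $a\in A$ with $b<a$ and $d\in A$ with $d<c$; I claim $N(d,b,c,a)$ is an N. The three positive relations $d<c$, $b\prec c$, $b<a$ hold by construction, and each remaining pair in $\{d,b,c,a\}$ must be incomparable: any comparability yields either two distinct comparable elements of $A$ or an element strictly between $b$ and $c$, both impossible. This contradicts N-freeness.

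For the converse, assume $N(a,b,c,d)$ witnesses that $P$ is not N-free. Since $a$ and $d$ are incomparable, extend $\{a,d\}$ to a maximal antichain $A$; since $b\prec c$ is a cover, extend $\{b,c\}$ to a maximal chain $C$. It remains to show $C\cap A=\emptyset$. Suppose $x\in C\cap A$. Then $x$ is comparable in $C$ to both $b$ and $c$, and I consider the five possibilities $x<b$, $x=b$, $b<x<c$, $x=c$, $x>c$. The strict-between case is forbidden by $b\prec c$. If $x=b$, then $b\in A$ together with $b<d\in A$ violates the antichain condition, and $x=c$ is handled dually via $a<c$. Finally, $x<b$ yields $x<d$ with $x,d\in A$ distinct, and $x>c$ yields $a<x$ with $a,x\in A$ distinct, each contradicting the antichain condition.

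The main obstacle in both directions is the small case analysis, in particular using the cover relation $b\prec c$ to exclude any element sitting strictly between $b$ and $c$; once that point is absorbed, the remaining verifications are routine, and the two arguments are essentially mirror images of one another.
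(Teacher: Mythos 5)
Your proof is correct. Note that the paper does not prove this theorem at all: it is quoted from Grillet's 1969 paper (``a more general version of the following theorem was proved''), so there is no internal argument to compare against, and your write-up in effect supplies the omitted proof. Both directions are sound. In the forward direction the decomposition $C=L\cup U$ into the elements of the chain lying below, respectively above, some element of $A$ is the right move: disjointness follows from $A$ being an antichain, nonemptiness of both parts from maximality of $C$, the cover $b\prec c$ between $\max L$ and $\min U$ again from maximality of $C$ (any $z$ with $b<z<c$ is comparable to every element of $C$ and could be adjoined), and the final case check that $(d,b,c,a)$ is an N is as routine as you claim --- each forbidden comparability either produces two distinct comparable elements of $A$, an element strictly between $b$ and $c$, or a point of $C\cap A$. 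In the converse direction you correctly use that the paper's notion of N requires $a\perp d$ (unlike the ``weak N'' used later), so that $\{a,d\}$ extends to a maximal antichain, and the five-way case analysis on the position of a hypothetical $x\in C\cap A$ relative to the cover $b\prec c$ is exhaustive and each case closes. The only point worth a remark is the degenerate empty poset, for which the statement itself fails under the usual conventions; this is an artifact of the formulation, not of your argument.
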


In \cite{habib1985n}, it was shown that finite N-free posets can by constructed
from one-element posets by iterating the construction of disjoint union and a
certain generalization of the ordinal sum. This characterization lead to some
applications in the theory of concurrent processes in computer science, see for
example \cite{habib1991remarks,boudol1988concurrency}.

\section{Compatible orthosets}

\begin{definition}\label{def:compatible}
We say that an orthoset $(O,\perp)$ is \emph{compatible} if for all pairs $x,y\in O$
with $x\not\perp y$ there is $z\in O$ such that $x^\perp\cup y^\perp\subseteq z^\perp$.
\end{definition}

This definition is taken from the paper \cite{lu2010zero}. 
In that paper, the authors introduced a notion of a {\em compact graph} as
a graph-theoretic characterization on zero-divisor graphs of posets originally 
introduced and studied in \cite{halavs2009beck}.
A graph is compact if it is compatible in the sense of \Cref{def:compatible}
and contains no isolated vertices. 

\begin{theorem}\label{thm:compatible}
An orthoset is compatible if and only if $L(O,\perp)$ is a Boolean algebra.
\end{theorem}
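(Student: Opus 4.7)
The plan is to recast compatibility as a lattice-theoretic condition on $L(O,\perp)$ and then reduce to a general characterization of Boolean ortholattices. The first step is to show that compatibility of $(O,\perp)$ is equivalent to the condition that, for all orthoclosed $X,Y\subseteq O$, $X\cap Y=\emptyset$ implies $X\perp Y$. For the forward direction, if this fails pick $x\in X$, $y\in Y$ with $x\not\perp y$; compatibility yields $z\in O$ with $x^\perp\cup y^\perp\subseteq z^\perp$, and from $x^\perp\subseteq z^\perp$ together with $\{x\}\subseteq X=X^\pp$ one obtains $z\in\{z\}^\pp\subseteq\{x\}^\pp\subseteq X$, and symmetrically $z\in Y$, contradicting $X\cap Y=\emptyset$. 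Conversely, if $x\not\perp y$ the orthoclosed sets $\{x\}^\pp$ and $\{y\}^\pp$ are not orthogonal, so the lattice condition produces some $z\in\{x\}^\pp\cap\{y\}^\pp$, and orthocomplementing the inclusions $\{z\}\subseteq\{x\}^\pp$ and $\{z\}\subseteq\{y\}^\pp$ gives $x^\perp\cup y^\perp\subseteq z^\perp$.

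The second and harder step is to prove the following general fact about ortholattices: $L$ is Boolean if and only if $a\wedge b=0$ implies $a\leq b^\perp$ for all $a,b\in L$. The \emph{only if} direction is immediate from the Boolean identity $a=(a\wedge b)\vee(a\wedge b^\perp)$. For \emph{if}, I would first establish orthomodularity: given $a\leq b$ and $c=a\vee(b\wedge a^\perp)$, the element $d=b\wedge c^\perp$ satisfies $d\leq c^\perp\leq a^\perp$ and $d\leq b$, hence $d\leq b\wedge a^\perp\leq c$; combined with $d\leq c^\perp$ this forces $d=0$, and the hypothesis then upgrades $b\wedge c^\perp=0$ to $b\leq c$. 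Once $L$ is orthomodular, for arbitrary $a,b$ the orthomodular law applied to $a\wedge b\leq a$ gives $a=(a\wedge b)\vee a'$ with $a'=a\wedge(a\wedge b)^\perp$; applying the hypothesis both to $a'\wedge b=0$ and to $(a\wedge b^\perp)\wedge(a\wedge b)=0$ and intersecting with $a$ establishes the mutual inclusions $a'\leq a\wedge b^\perp\leq a'$. Hence every pair commutes and $L$ is Boolean.

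The main obstacle is step two, because it is a general fact about ortholattices not packaged by anything earlier in the paper; the short chain from \emph{disjoint-implies-orthogonal} through \emph{orthomodular} and \emph{every pair commutes} to \emph{Boolean} has to be carried out by hand, with the right choice of $d$ in the orthomodularity argument and the right mutual inclusions in the commutation argument. Once this lemma is in hand, step one is just a routine translation between the pointwise definition of compatibility and its reformulation at the lattice level.
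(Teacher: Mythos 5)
Your proposal is correct and follows essentially the same route as the paper: compatibility is first translated into the condition that disjoint orthoclosed sets are orthogonal (equivalently, $a\wedge b=0\implies a\leq b^\perp$ in $L(O,\perp)$), and then one invokes the characterization of Boolean ortholattices by that very condition. The only difference is that the paper simply cites this ortholattice characterization as \cite[Proposition 2.3]{jenca2023orthogonality}, whereas you prove it inline (your argument is fine, though the final step ``every pair commutes implies Boolean'' itself rests on the standard Foulis--Holland commutation theorem, which you should cite).
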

\begin{proof}
Suppose that $(O,\perp)$ is compatible, that means, for all $x,y\in V(G)$ such that $x\not\perp y$, there is $z\in V(G)$ such that $x^\perp\cup y^\perp\subseteq z^\perp$. Note that
\begin{align*}
x^\perp\cup y^\perp\subseteq z^\perp&\Leftrightarrow \\
(x^\perp\cup y^\perp)^\perp\supseteq z^{\perp\perp}&\Leftrightarrow \\
x^{\perp\perp}\cap y^{\perp\perp}\supseteq z^{\perp\perp}&\Leftrightarrow \\
z\in x^{\perp\perp}\cap y^{\perp\perp}
\end{align*}
So we may reformulate the compatibility as
$$
x\not\perp y\implies x^{\perp\perp}\cap y^{\perp\perp}\neq\emptyset
$$
and this is equivalent to
\begin{equation}\label{eq:charcompatible}
x^{\perp\perp}\cap y^{\perp\perp}=\emptyset\implies x\perp y.
\end{equation}
Let us prove that \eqref{eq:charcompatible} is equivalent to $L(O,\perp)$ being Boolean. 

Suppose that $L(O,\perp)$ is Boolean. Recall \cite[Proposition 2.3]{jenca2023orthogonality}, that an ortholattice $L$ is a Boolean algebra if and only if it satisfies the property
\begin{equation}\label{eq:charbool}
a\wedge b=0\implies a\leq b^\perp,
\end{equation}
for all elements $a,b\in L$. Hence if $L(O,\perp)$ is Boolean, then $x^{\perp\perp}\cap y^{\perp\perp}=\emptyset$ implies that
$x^{\perp\perp}\subseteq y^{\perp\perp\perp}$ by \eqref{eq:charbool}, meaning that
$$
x\in x^{\perp\perp}\subseteq y^{\perp\perp\perp}=y^\perp,
$$
hence $x\in y^\perp$ and $x\perp y$.

Suppose that \eqref{eq:charcompatible} is satisfied and let $A,B$ be orthoclosed subsets with $A\cap B=\emptyset$. We need to prove that $A\subseteq B^\perp$, in other words,
that $x\in A$ and $y\in B$ imply that $x\perp y$. This is easy: since $A$ and $B$ are
orthoclosed, we obtain $x^{\perp\perp}\subseteq A$ and $y^{\perp\perp}\subseteq B$,
so $x^{\perp\perp}\cap y^{\perp\perp}=\emptyset$, because $A$ and $B$ are disjoint and hence, by \eqref{eq:charcompatible}, $x\perp y$.
\end{proof}
\begin{corollary}
Every compatible orthoset is Dacey.
\end{corollary}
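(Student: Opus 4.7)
The plan is essentially to chain together three results already established in the paper. By \Cref{thm:compatible}, the hypothesis that $(O,\perp)$ is compatible is equivalent to $L(O,\perp)$ being a Boolean algebra. In the preliminaries it is noted that every Boolean algebra is an orthomodular lattice (this is the observation that the distributive law implies the orthomodular law, which is trivial). Finally, \Cref{thm:dacey} tells us that $L(O,\perp)$ being an orthomodular lattice is precisely the definition of $(O,\perp)$ being Dacey.

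So the proof amounts to a single line: $(O,\perp)$ compatible $\Rightarrow$ $L(O,\perp)$ Boolean $\Rightarrow$ $L(O,\perp)$ orthomodular $\Rightarrow$ $(O,\perp)$ Dacey. There is no real obstacle here, since all three implications are immediate invocations of earlier results. The only minor thing to double-check is that the paper has indeed stated (or implicitly noted) that Boolean algebras are orthomodular — and it does, in the subsection on ortholattices.
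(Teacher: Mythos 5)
Your proof is correct and follows exactly the paper's own argument: compatibility gives a Boolean logic by Theorem~\ref{thm:compatible}, Boolean algebras are orthomodular (as noted in the preliminaries), and Theorem~\ref{thm:dacey} converts orthomodularity of $L(O,\perp)$ into the Dacey property. The only cosmetic quibble is that being Dacey is \emph{defined} by the basis condition and is \emph{equivalent} to orthomodularity of the logic via Theorem~\ref{thm:dacey}, rather than being defined by it, but this does not affect the argument.
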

\begin{proof}
Every Boolean algebra is an orthomodular lattice. The rest follows by \Cref{thm:dacey}.
\end{proof}

\section{Incomparability orthosets}

Let $P$ be a poset. For $x,y\in P$ we will write $x\perp y$ if and
only if $x,y$ are incomparable. We say that $(P,\perp)$ is the
{\em incomparability orthoset} of $P$.

Let us formulate a refinement of the \Cref{lemma:orthocomplements} for
incomparability orthosets.
\begin{lemma}
\label{lemma:updown}
Let $P$ be a poset, let $(P,\perp)$ be the incomparability
orthoset of $P$. Let $X,Y$ be subsets of $P$. Then
$X$ is orthoclosed and $Y=X^\perp$ if and only if $X\perp Y$ and for every $z\in P\setminus(X\cup Y)$
there are $x\in X$, $y\in Y$ such that either $z<x$, $z<y$ or $z>x$, $z>y$.
\end{lemma}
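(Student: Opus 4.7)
The plan is to deduce this from \Cref{lemma:orthocomplements}, using that in the incomparability orthoset $z\not\perp x$ is equivalent to $z$ and $x$ being comparable, i.e., $z<x$ or $z>x$. The $(\Leftarrow)$ direction will be immediate: either alternative of the hypothesis forces $z\not\perp x$ and $z\not\perp y$, so \Cref{lemma:orthocomplements} applied to $X\perp Y$ together with these witnesses yields $X$ orthoclosed and $Y=X^\perp$.

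For the $(\Rightarrow)$ direction, I fix $z\in P\setminus(X\cup Y)$ and start from the pair $x\in X$, $y\in Y$ comparable with $z$ supplied by \Cref{lemma:orthocomplements}. The only new content is to promote these to a pair lying on the \emph{same} side of $z$. I plan to partition $X$ into $X_{<}=\{x\in X:z<x\}$ and $X_{>}=\{x\in X:z>x\}$, and analogously $Y_{<},Y_{>}$; then \Cref{lemma:orthocomplements} gives $X_{<}\cup X_{>}\neq\emptyset$ and $Y_{<}\cup Y_{>}\neq\emptyset$, and the goal becomes to show that at least one of the ``same-side'' products $X_{<}\times Y_{<}$ or $X_{>}\times Y_{>}$ is non-empty.

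The key, and only, use of the hypothesis $X\perp Y$ will be to rule out mixed pairs: any $x\in X_{>}$ together with $y\in Y_{<}$ would give $x<z<y$ and hence $x<y$, contradicting $X\perp Y$; symmetrically, $X_{<}\times Y_{>}=\emptyset$. Consequently, once one chooses a non-empty side on the $X$ factor, the opposite side on the $Y$ factor must be empty, so the matching side on the $Y$ factor is forced to be non-empty, producing the required pair. I do not foresee any real obstacle; the proof amounts to a short case split, its content being just the translation of the abstract non-orthogonality witnesses from \Cref{lemma:orthocomplements} into the strict order of the poset.
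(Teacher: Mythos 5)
Your proposal is correct and follows essentially the same route as the paper: both directions reduce to \Cref{lemma:orthocomplements}, and the forward direction hinges on the same key observation that $X\perp Y$ forbids the mixed configuration $y<z<x$ (or $x<z<y$), forcing the two witnesses onto the same side of $z$. Your packaging via the sets $X_<,X_>,Y_<,Y_>$ is only a cosmetic variant of the paper's direct case split on the witnesses supplied by \Cref{lemma:orthocomplements}.
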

\begin{proof}
Suppose that $X$ is orthoclosed and that $Y=X^\perp$. Let $z\in P\setminus(X\cup Y)$.  By
\Cref{lemma:orthocomplements}, there are $x\in X$ and $y\in Y$ such that
$z\not\perp x$ and $z\not\perp y$. If $z\not\perp x$, then $z=x$ or $z<x$ or
$z>x$. However $z=x$ contradicts $z\notin X$ so $z\neq x$.  Similarly, $z\neq y$,
so either $z<y$ or $z>y$. Suppose that $z<x$. If $z>y$, then $y<z<x$, but this contradicts $X\perp Y$. Therefore, $z<y$. Similarly, $z>x$ implies $z>y$.

The opposite implication follows directly by \Cref{lemma:orthocomplements}.
\end{proof}
By \Cref{lemma:updown}, for every orthoclosed set $X$ in the incomparability
orthoset $(P,\perp)$ we have 
$$
P\setminus(X\cup X^\perp)=U_X\cup D_X
$$
where
\begin{align*}
D_X&=\{z\in P\setminus(X\cup X^\perp)\mid \exists x\in X,y\in X^\perp: z<x,z<y\} \\
U_X&=\{z\in P\setminus(X\cup X^\perp)\mid \exists x\in X,y\in X^\perp: x<z,y<z\}.
\end{align*}

Note that $U_X\cap D_X=\emptyset$. Indeed, if $z\in U_X\cap D_X$ then $x<z<y$, where $x\in X$ and $y\in X^\perp$, contradicting $X\perp X^\perp$.

\subsection{The orthomodular case}

\begin{theorem}\label{thm:main}
Let $P$ be a finite poset. The following are equivalent.
\begin{enumerate}[(a)]
\item $P$ is N-free.
\item $(P,\perp)$ is Dacey.
\item $L(P,\perp)$ is an orthomodular lattice.
\end{enumerate}
\end{theorem}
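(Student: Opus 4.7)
The equivalence (b) $\Leftrightarrow$ (c) is immediate from \Cref{thm:dacey}. For (a) $\Leftrightarrow$ (b), I would proceed via contrapositive in each direction.

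For (b) $\Rightarrow$ (a), suppose $P$ contains an N, say $N(a,b,c,d)$. Put $X = \{a,c\}^{\perp\perp}$, which is orthoclosed, and let $B$ be any maximal antichain of $X$ containing $a$. I claim that $b$ witnesses the failure of the Dacey condition: $b \in B^\perp \setminus X^\perp$. The membership $b \notin X^\perp$ is immediate from $c \in X$ and $b < c$. For $b \in B^\perp$, take any $b' \in B$; if $b' = a$, then $b \perp b'$ by the N, so assume $b' \neq a$. Then $b' \in X$ yields $b' \perp d$ (as $d \in \{a,c\}^\perp$), and $b' \perp a$ since $B$ is an antichain. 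These rule out $b' < b$ (which would give $b' < d$), and, via a short case split on $b'$ versus $c$ using the cover $b \prec c$ (where $b' \parallel a$ together with $b' \parallel c$ would place $b'$ into both $\{a,c\}^\perp$ and its double-orthogonal), also $b' > b$. As $b \notin X$ (because $b < d \in \{a,c\}^\perp$), $b \neq b'$, so $b' \perp b$.

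For (a) $\Rightarrow$ (b), suppose $(P,\perp)$ is not Dacey. By \Cref{lemma:chardacey} there are orthoclosed $X$, a basis $B$, and $z \in B^\perp \setminus X^\perp$. Hence some $x \in X$ is comparable to $z$, and by order-duality of the incomparability orthoset I may assume $x < z$. Since $z \perp B$ forces $x \notin B$, maximality of $B$ yields $b^* \in B$ comparable to $x$, and $x > b^*$ would give $b^* < z$ (against $b^* \perp z$), so $x < b^*$. Also $z \notin X$ (else $B \cup \{z\}$ would enlarge $B$), and $z \in D_X$ is incompatible with $x < z$ and $X \perp X^\perp$, so by \Cref{lemma:updown} $z \in U_X$: pick $y \in X^\perp$ with $y < z$, so $y \perp x$ and $y \perp b^*$. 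This gives the configuration
\[
y < z,\quad x < z,\quad x < b^*,\quad b^* \parallel z,\quad y \parallel x,\quad y \parallel b^*.
\]

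To extract an actual N, I induct on the length $n$ of a saturated chain $x = x_0 \prec x_1 \prec \cdots \prec x_n = z$. If $n=1$, then $N(y,x,z,b^*)$ is an N. For $n \geq 2$, let $i^*$ be the smallest index with $x_{i^*} \not< b^*$; this exists and satisfies $x_{i^*} \parallel b^*$, since $x_j > b^*$ would contradict $b^* \parallel z$. Writing $p = x_{i^*-1}$ and $q = x_{i^*}$ gives $p \prec q$, $p < b^*$, $q \parallel b^*$, and one verifies $y \parallel p$. If $y < q$, then $N(y,p,q,b^*)$ is an N. The delicate case is $y \parallel q$; here I classify $q$ using \Cref{lemma:updown}. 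Both $q \in X^\perp$ and $q \in D_X$ are ruled out by $q > x \in X$, leaving either $q \in U_X$---in which case any $X^\perp$-witness $y''$ with $y'' < q$ yields $N(y'',p,q,b^*)$---or $q \in X$, where $q \notin B$ (forced by $q < z \in B^\perp$) produces some $b^{**} \in B$ with $q < b^{**}$; this reduces to the same setup with $q$ replacing $x$ and a strictly shorter chain, closing the induction. The main technical subtlety is this classification of $q$ and verifying that the $y \parallel q$ case always falls into one of the two usable subcases.
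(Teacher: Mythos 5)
Your proof is correct, and although it shares the paper's broad strategy (contrapositive in both directions, \Cref{lemma:chardacey} for the failure of the Dacey property, \Cref{lemma:updown} and the $U_X/D_X$ decomposition, and order-duality), both key constructions are genuinely different from the paper's. For (b)$\Rightarrow$(a) the paper takes $X=\{x\in P: x<c \text{ and } x\nleq d\}$ and exhibits $X^\perp$ as a non-Dacey subset via a basis through $d$, with $c$ as the element of $B^\perp\setminus X^{\perp\perp}$; you instead take $\{a,c\}^{\perp\perp}$ with a basis through $a$ and use $b$ as the witness. Your case split does close: for $b'>b$, the subcase $b'\parallel a$, $b'\parallel c$ puts $b'\in X\cap X^\perp$, which is empty by irreflexivity, and the comparable subcases contradict either $b\prec c$ or $a\perp b'$. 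For (a)$\Rightarrow$(b) the paper makes two extremal choices --- the witness $c$ minimal in $U_X\cap B^\perp$, then $b$ maximal in $[x,c)$ --- so that the covering pair of the resulting N sits directly below the witness; you instead induct on the length of a saturated chain from $x$ to $z$ and locate the first link $p\prec q$ escaping from below $b^*$, so your N's covering pair lies in the interior of the chain and its fourth vertex comes from the basis. The steps you flag as delicate all check out: $y\parallel p$ follows since $p<y$ would give $x<y$ and $y\leq p$ would give $y<b^*$; the classification of $q$ is exhaustive ($q\in X^\perp$ and $q\in D_X$ both contradict $x<q$ via $X\perp X^\perp$); and the recursion terminates because $q\neq z$ when $q\in X$. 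The paper's extremal argument is shorter and avoids recursion; yours avoids the minimality assumption on the witness at the cost of an induction. Both are valid.
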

\begin{proof}
The equivalence of (b) and (c) follows by \Cref{thm:dacey}.

(b)$\implies$(a):
Assume that $N(a,b,c,d)$ in $P$.
We will prove that this implies
existence of a non-Dacey subset of $(P,\perp)$.
Let
$$
X=\{x\in P:x<c\text{ and }x\nleq d\}.
$$
As $a\in X$, $X$ is nonempty. We will prove that the orthoclosed set $X^\perp$ is
not Dacey. We need to prove three auxiliary claims first.

\begin{claim}
$d\in X^\perp$. 
\end{claim}
\noindent{\em Proof of the claim.}
If $d\not\perp X$, then there is a $x\in X$ such that $d\not\perp x$,
so either $x\leq d$ or $d<x$. However, $x\leq d$ contradicts $x\in X$ and 
$d<x$ implies $d<x<c$, but $d\nless c$.

Let $B$ be a basis of the orthoclosed set $X^\perp$ with $d\in B$.

\begin{claim}
$c\in B^\perp$.
\end{claim}
\noindent{\em Proof of the claim.}
Assume that $c\not\perp B$. Then there is $y\in B$ such that
$c\not\perp y$. Note that $c\perp d$, so $y\neq d$ and since $y,d$ are distinct
elements of the orthogonal set $B$, $y\perp d$. As $c\not\perp y$, either
$c\leq y$ or $y<c$. If $c\leq y$, then $a<c\leq y$, which implies $y\not\perp a\in X$,
contradicting $y\in X^\perp$. Assume that $y<c$. We already know that $y\perp d$,
hence $y\nleq d$. However, $y<c$ and $y\nleq d$ mean that $y\in X$, contradicting
$y\in X^\perp$.

\begin{claim}
$b\in X^\perp$
\end{claim}
\noindent{\em Proof of the claim.}
Assume that $b\not\perp X$. There is $x\in X$ such that $b\not\perp x$.
If $b<x$, then $b<x<c$ and this contradicts $b\prec c$. If $x\leq b$, then
$x\leq b<d$, contradicting $x\in X$.

As $b\in X^\perp$ and $b\leq c$, $c\notin X^{\perp\perp}$, hence $c\in B^\perp$ and
$c\notin X^{\perp\perp}$, so $B^\perp \supset X^{\perp\perp}$ and $X^\perp$ is not Dacey.

(a)$\implies$(b): Assume that $(P,\perp)$ is not Dacey.  We shall prove that
this implies that $P$ contains a covering N.

There is some non-Dacey orthoclosed subset $X$ of $(P,\perp)$. As $X$ is not Dacey,
there is a basis $B$ of $X$ with $X^\perp\subset B^\perp$. Let $c\in B^\perp\setminus
X^\perp$.

As $c\in B^\perp$, $c\perp B$, so $c\notin X$ because $B$ is a maximal orthogonal subset
of $X$. Therefore, $c\in P\setminus(X\cup X^\perp)$. By \Cref{lemma:updown}, $c\in U_X$ or $c\in
D_X$. We shall assume that $c\in U_X$; the other case follows by duality.

We see that $c\in U_X\cap(B^\perp\setminus X^\perp)$. However, since $U_X\cap
X^\perp=\emptyset$, $U_X\cap(B^\perp\setminus X^\perp)=U_X\cap B$. Let us
assume that $c$ is a minimal element of the set $U_X\cap B^\perp$.

Since $c\in U_X$, there are $x\in X,a\in X^\perp$ such that $x<c$ and $a<c$. As $x<c$, the
semiopen interval
$$
[x,c)=\{w\in P:w<c\text{ and }x\leq w\}
$$
is nonempty. Let $b$ be a maximal element of $[x,c)$; note that $b\prec c$.

Suppose that $b\notin X$. As $x\leq b$, $b\notin X^\perp$, so $b\in
P\setminus(X\cup X^\perp)$ and and $X\ni x<b$. By \Cref{lemma:updown}, there is $y\in
X^\perp$ such that $y<b$.  As $X\ni x<b>y\in X^\perp$, $b\in U_X$. 

If $b\in B^\perp$, then $b\in U_X\cap B^\perp$, contradicting the
minimality of $c$ in $U_X\cap B^\perp$. So $b\notin B^\perp$, meaning that
there is $w\in B$ such that $b\not\perp w$. Note that $w\leq b<c$ contradicts
$c\in B^\perp$, hence $b<w$ and we see that $N(a,b,c,w)$.

Suppose that $b\in X$. Then $b\notin B$, because $c\perp B$ and $c\not\perp b$.  As $B$ is a
basis of $X$, $b\in X$ and $b\notin B$ imply that $b\not\perp B$. So there is a $d\in B$
such that $b\not\perp d$. If $d\leq b$, then $B\ni d\leq b<c$, contradicting $c\perp B$.
Hence $b<d$ and we see that $N(a,b,c,d)$.
\end{proof}

\subsection{The Boolean case}

Let $P$ be a poset. 
We say that $(a,b,c,d)\in P^4$ form \emph{a weak N}, if
$a<c$, $b\prec c$, $b<d$, $a\perp b$,  $c\perp d$. We denote this by
$N^?(a,b,c,d)$. Note that $N(a,b,c,d)$ iff $N^?(a,b,c,d)$ and $a\perp d$ or,
in other words, in a weak N the incomparability of $a$ and $d$ is not required.
In particular, $N(a,b,c,d)$ implies $N^?(a,b,c,d)$, so a poset without a
weak N is certainly N-free.

\begin{theorem}
For every finite poset $P$, the following are equivalent.
\begin{enumerate}[(a)]
\item There is no weak N in $P$.
\item $(P,\perp)$ is a compatible orthoset.
\item $L(P,\perp)$ is Boolean. 
\end{enumerate}
\end{theorem}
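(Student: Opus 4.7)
By \Cref{thm:compatible} the equivalence $(b)\Leftrightarrow(c)$ is automatic, so the substantive work is $(a)\Leftrightarrow(b)$. Throughout I would use the reformulation of compatibility worked out in the proof of \Cref{thm:compatible}, namely that $(P,\perp)$ is compatible if and only if $x\not\perp y$ implies $x^{\pp}\cap y^{\pp}\neq\emptyset$.

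For $(b)\Rightarrow(a)$, I argue contrapositively: assume $N^?(a,b,c,d)$ and produce an incompatible pair. The natural candidate is $(b,c)$, which satisfies $b\not\perp c$ because $b<c$. Suppose $z\in P$ witnesses $b^\perp\cup c^\perp\subseteq z^\perp$. Since $a\in b^\perp$ and $d\in c^\perp$, this forces $z\perp a$ and $z\perp d$. Both $z\perp b$ and $z\perp c$ are immediately impossible: either would put $z$ in its own orthogonal and give $z\perp z$. So $z$ is comparable with both $b$ and $c$, and the remaining positions die by a short case sweep: $z=b$ gives $d\in c^\perp\subseteq b^\perp$, i.e.\ $b\perp d$, contradicting $b<d$; $z=c$ is symmetric and contradicts $a<c$; $z<b$ forces $z<d$, contradicting $z\perp d$; $z>c$ forces $z>a$, contradicting $z\perp a$; and $b<z<c$ is excluded by the cover $b\prec c$.

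For $(a)\Rightarrow(b)$, again contrapositively: assume noncompatibility, pick $x<y$ with $x^{\pp}\cap y^{\pp}=\emptyset$, and extract the witnesses $u\in x^\perp$ with $u<y$ and $v\in y^\perp$ with $v>x$ that this forces (the options $u=y$, $u>y$ and $v=x$, $v<x$ are easily ruled out). I would then mirror the covering-chain strategy from the proof of \Cref{thm:main} $(a)\Rightarrow(b)$: choose $v$ minimal in $\{w>x:w\perp y\}$, run a saturated chain $x=w_0\prec\cdots\prec w_m=v$, and let $b:=w_{m-1}$. The minimality of $v$ should force $b\leq y$, and the argument that $b\neq y$ gives $b<y$; of course $b\prec v$ comes for free. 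The candidate weak N is then $(a,b,v,y)$, in which $b\prec v$, $b<y$, and $v\perp y$ are already in place, reducing the problem to selecting $a\in x^\perp$ with $a<v$ and $a\perp b$.

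The hard step is exactly this last selection. Knowing $a\in x^\perp$ only yields $a\perp b$ when $b\in x^{\pp}$, and securing this requires a further extremal argument: the minimality of $v$ together with the definition of the closure should force the predecessor $b$ to lie in $x^{\pp}$, after which any $a\in x^\perp$ — in particular $u$ — is automatically orthogonal to $b$. Dually, running a descending saturated chain from $y$ down to an extremal $u\in x^\perp$ and working symmetrically should place $v$ in $y^{\pp}$ and give $a<v$. Getting a single cover of $P$ to land simultaneously inside $x^{\pp}$ on the lower side and $y^{\pp}$ on the upper side is where the bulk of the technical work lies, and I expect the final argument to interleave both chains to secure the required weak N.
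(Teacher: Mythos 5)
Your direction $(b)\Rightarrow(a)$ is correct and is essentially the paper's argument: given $N^?(a,b,c,d)$ you test the pair $(b,c)$, use $a\in b^\perp$ and $d\in c^\perp$ to force $z\perp a$ and $z\perp d$, and your case sweep collapses to $b<z<c$, killed by the cover $b\prec c$. That half needs no changes.

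The direction $(a)\Rightarrow(b)$ (equivalently, non-compatibility forces a weak N) has a genuine gap, which you yourself flag: the weak N candidate $(a,b,v,y)$ is never completed, because you never produce an element $a$ with both $a<v$ and $a\perp b$. Your proposed repair --- that minimality of $v$ in $\{w>x: w\perp y\}$ should force $b\in x^{\pp}$ --- is not proved and does not obviously hold: the problematic case is $w\perp x$ with $w<b$, and minimality of $v$ says nothing about elements orthogonal to $x$ lying below $b$. Even if $b\in x^{\pp}$ were secured, taking $a=u\in x^\perp$ gives $a\perp b$ but only $a<y$, not the needed $a<v$; the dual chain you gesture at does not visibly close this. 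The idea your argument is missing is that the failure of compatibility must be invoked at \emph{every} element strictly between $x$ and $y$, not just at $x$ and $y$. The paper fixes a maximal chain $x\prec z_n\prec\dots\prec z_1\prec y$ and, for each $i$, uses $x^\perp\cup y^\perp\not\subseteq z_i^\perp$ to extract a witness $w_i$ satisfying either ($w_i<z_i$ and $w_i\perp x$) or ($z_i<w_i$ and $w_i\perp y$); a weak N then appears at the last index $k$ where the first alternative holds, namely $N^?(a,z_1,y,w_1)$, $N^?(w_n,x,z_n,d)$, or $N^?(w_k,z_{k+1},z_k,w_{k+1})$ according to the position of $k$. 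Without some version of this per-link witness extraction and the ``last sign change'' selection, your construction cannot be completed as stated.
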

\begin{proof}
By \Cref{thm:compatible}, (b) is equivalent to (c).

(a)$\implies$(b): Suppose that we have $N^?(a,b,c,d)$ in $P$. As $b\prec c$, $c\not\perp b$. We will prove that there is
no $z\in P$ such that $c^\perp\cup b^\perp\subseteq z^\perp$. 

Assume the contrary; this implies that $z\not\perp b$ and $z\not\perp c$.
Indeed, $z\perp b$ would mean that $z\in b^\perp\subseteq z^\perp$ and hence $z\perp z$ 
which is not true, so $z\not\perp b$. Similarly $z\not\perp c$. Furthermore, as $a\in b^\perp$
and $d\in c^\perp$, we see that $\{a,d\}\subseteq b^\perp\cup c^\perp\subseteq z^\perp$, so
$z\perp a$ and $z\perp d$.

We have proved that $z\not\perp b$. This is equivalent to $z\leq b$ or $b<z$. However,
$z\leq b<d$ contradicts $z\perp d$, so $b<z$. Similarly, we can prove that $z<c$. But $b<z<c$
contradicts with our assumption that $b\prec c$.

\noindent(b)$\implies$(a): Suppose that $(P,\perp)$ is not compatible. We will
prove that there is some weak N in $P$.

Since $(P,\perp)$ is not compatible, there are $x,y\in P$ with $x\not\perp y$ 
such that there does not exist $z\in P$ such that $x^\perp\cup y^\perp\subseteq z^\perp$.
Equivalently, there are $x,y\in P$ with $x\not\perp y$, such that for all $z\in P$, $x^\perp\cup y^\perp\not\subseteq z^\perp$.

Clearly, $x\neq y$, because otherwise we could put $z=x$. Hence $x\not\perp y$
implies that either $x<y$ or $x>y$. Without loss of generality, we may assume
that $x<y$.  If $x^\perp\subseteq y^\perp$, then $x^\perp\cup y^\perp=y^\perp$
and we may put $z=y$, contradicting our assumptions. So $x^\perp\not\subseteq
y^\perp$ and, similarly, $y^\perp\not\subseteq x^\perp$. Hence there must be
$a\in x^\perp\setminus y^\perp$ and $d\in y^\perp\setminus x^\perp$.
Summarizing, we see that $a\perp x$, $a\not\perp y$, $d\perp y$ and $d\not\perp x$.

As $a\not\perp y$, either $a<y$ or $y\leq a$. If $y\leq a$, then $x<y\leq a$,
but this contradicts $a\perp x$. So $a<y$ and, similarly, we can prove that
$x<d$. If $x\prec y$, then $N^?(a,x,y,d)$ and we are done. If $x\not\prec y$,
then there is $z\in P$ be such that $x<z<y$. For any such $z$, we cannot have
$x^\perp\cup y^\perp\subseteq z^\perp$, that means, there is $w\in x^\perp\cup
y^\perp$ such that $w\not\perp z$. Since $z\not\in x^\perp\cup y^\perp$, $w\neq
z$. Hence, for every $z\in P$ with $x<z<y$ there is $w\in x^\perp\cup y^\perp$ such that
either $w<z$ or $z<w$. However, if $w<z$, then $w<z<y$ implies $w\notin y^\perp$, so we must
have $w\in x^\perp$. Similarly $z<w$ implies $w\in y^\perp$.

Having this property of every such $z$ in mind, consider a maximal chain 
$$
x\prec z_n\prec\dots\prec z_1\prec y
$$
in the closed interval $[x,y]$ of $P$. There is a sequence $w_1,\dots,w_n$
such that for each $i$ either ($w_i<z_i$ and $w_i\perp x$) or ($z_i<w_i$ and $w_i\perp y$). 

Suppose that $z_1<w_1$. Then $N^?(a,z_1,y,w_1)$ and we are done, so suppose that
$w_1<z_1$. Let $k$ be the greatest $k\in\{1,\dots,n\}$ such that 
such that $w_k<z_k$. If $k=n$, then $N^?(w_n,x,z_n,d)$ and we are done.
If $k<n$ then $z_{k+1}<w_{k+1}$ by maximality of $k$ and then
$N^?(w_k,z_{k+1},z_k,w_{k+1})$.
\end{proof}

\section{Conclusion}

Since the characterization of N-free posets in \Cref{thm:chainantichain} is chain--antichain symmetric,
at this point a natural question arises: can we replace the incomparability
relation in \Cref{thm:main} by {\em strict comparability}? The following
example shows that this is not possible.  
\begin{example} Consider the poset $P$ on
\begin{figure}
\begin{center}
\includegraphics{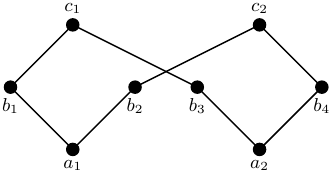}
\end{center}
\caption{The counterexample $P$}
\label{fig:counterex}
\end{figure}
\Cref{fig:counterex}. It is easy to see that it is N-free. The {\em strict
comparability orthospace} is $(P,\lessgtr)$, where $x\lessgtr y$ means that
$x<y$ or $x>y$. 
The {\em strict comparability orthospace} of $P$ is $(P,\lessgtr)$, where $x\lessgtr y$ means that $x<y$ or $x>y$.

Consider the sets $X=\{a_1,a_2\}$ and $Y=\{c_1,c_2\}$. We see that
$X=Y^{\lessgtr}$ and $Y=X^{\lessgtr}$, so $X$ is orthoclosed in $(P,\lessgtr)$. Pick a basis
$B=\{a_1\}$ of $X$. Clearly, $b_1\in B^{\lessgtr}$ and $b_1\notin X^\lessgtr=Y$, hence $X$ is not Dacey.
\end{example}

Therefore, we can not replace ``incomparability'' by ``strict comparability''
in \Cref{thm:main}. Let us close with an open problem.
\begin{problem}
Characterize finite posets for which the strict comparability orthospace is Dacey.
\end{problem}

\section*{Declarations}
\begin{description}
\item[\bf Funding:]
This research is supported by grants VEGA 2/0128/24 and 1/0036/23,
Slovakia and by the Slovak Research and Development Agency under the contracts APVV-20-0069
and APVV-23-0093.
\end{description}


\begin{thebibliography}{10}

\bibitem{abdi2023proof}
Davoud Abdi.
\newblock A proof of the alternate {T}homass{\'e} conjecture for countable n-free posets.
\newblock {\em Order}, pages 1--29, 2023.

\bibitem{birkhoff1948lattice}
Garrett Birkhoff.
\newblock {\em Lattice theory}.
\newblock American Mathematical Society, Providence, 1948.

\bibitem{boudol1988concurrency}
Gerard Boudol and Ilaria Castellani.
\newblock Concurrency and atomicity.
\newblock {\em Theoretical Computer Science}, 59(1-2):25--84, 1988.

\bibitem{dacey1968orthomodular}
James~Charles Dacey~Jr.
\newblock {\em Orthomodular spaces}.
\newblock PhD thesis, University of Massachusetts Amherst, 1968.

\bibitem{davey2002introduction}
Brian~A Davey and Hilary~A Priestley.
\newblock {\em Introduction to lattices and order}.
\newblock Cambridge university press, 2002.

\bibitem{grillet1969maximal}
P~Grillet.
\newblock Maximal chains and antichains.
\newblock {\em Fundamenta Mathematicae}, 65(2):157--167, 1969.

\bibitem{habib1985n}
Michel Habib and Roland J{\'e}gou.
\newblock N-free posets as generalizations of series-parallel posets.
\newblock {\em Discrete applied mathematics}, 12(3):279--291, 1985.

\bibitem{habib1991remarks}
Michel Habib, Michel Morvan, and Jean-Xavier Rampon.
\newblock Remarks on some concurrency measures.
\newblock In {\em Graph-Theoretic Concepts in Computer Science: 16th International Workshop WG'90 Berlin, Germany, June 20--22, 1990 Proceedings 16}, pages 221--238. Springer, 1991.

\bibitem{halavs2009beck}
Radom{\'\i}r Hala{\v{s}} and Marek Jukl.
\newblock On beck’s coloring of posets.
\newblock {\em Discrete Mathematics}, 309(13):4584--4589, 2009.

\bibitem{jenca2023orthogonality}
Gejza Jen{\v{c}}a.
\newblock Orthogonality spaces associated with posets.
\newblock {\em Order (to appear)}, 2022.

\bibitem{lovasz1978kneser}
L{\'a}szl{\'o} Lov{\'a}sz.
\newblock Kneser's conjecture, chromatic number, and homotopy.
\newblock {\em Journal of Combinatorial Theory, Series A}, 25(3):319--324, 1978.

\bibitem{lu2010zero}
Dancheng Lu and Tongsuo Wu.
\newblock The zero-divisor graphs of posets and an application to semigroups.
\newblock {\em Graphs and Combinatorics}, 26:793--804, 2010.

\bibitem{mohring1989computationally}
Rolf~H M{\"o}hring.
\newblock Computationally tractable classes of ordered sets.
\newblock In {\em Algorithms and order}, pages 105--193. Springer, 1989.

\bibitem{paseka2022categories}
Jan Paseka and Thomas Vetterlein.
\newblock Categories of orthogonality spaces.
\newblock {\em Journal of Pure and Applied Algebra}, 226(3), 2022.

\bibitem{paseka2022normal}
Jan Paseka and Thomas Vetterlein.
\newblock Normal orthogonality spaces.
\newblock {\em Journal of Mathematical Analysis and Applications}, 507(1), 2022.

\bibitem{stanley1986enumerative}
Richard~P. Stanley.
\newblock {\em Enumerative combinatorics}, volume~1.
\newblock Wadsworth and Brooks/Cole, Monterey, 1986.

\bibitem{sumner1974dacey}
David~P Sumner.
\newblock Dacey graphs.
\newblock {\em Journal of the Australian Mathematical Society}, 18(4):492--502, 1974.

\bibitem{walker1983graphs}
James~W Walker.
\newblock From graphs to ortholattices and equivariant maps.
\newblock {\em Journal of Combinatorial Theory, Series B}, 35(2):171--192, 1983.

\bibitem{wilce2011test}
Alexander Wilce.
\newblock Test spaces.
\newblock In Kurt Engesser, Dov~M Gabbay, and Daniel Lehmann, editors, {\em Handbook of quantum logic and quantum structures: Quantum structures}, pages 443--549. Elsevier, Amsterdam, 2011.

\end{thebibliography}
\end{document}